\newtheorem{thm}{Theorem}[section]
\newtheorem{conj}[thm]{Conjecture}
\newtheorem{lem}[thm]{Lemma}
\newtheorem{prob}[thm]{Problem}
\theoremstyle{definition}
\newtheorem{exa}[thm]{Example}
\numberwithin{equation}{section}
\begin{document}


\baselineskip=20pt


\title{A new construction of counterexamples to the bounded orbit conjecture}

\author[J.~Mai]{Jiehua Mai}
\address{School of Mathematics and Quantitative
Economics, Guangxi University of Finance and Economics, Nanning,
Guangxi, 530003, P. R. China \& Institute of Mathematics, Shantou
University, Shantou, Guangdong, 515063, P. R. China}
\email{jiehuamai@163.com; jhmai@stu.edu.cn}

\author[E.~Shi]{Enhui Shi}
\thanks{*Corresponding author}
\address{School of Mathematics and Sciences, Soochow University, Suzhou, Jiangsu 215006, China}
\email{ehshi@suda.edu.cn}

\author[K.~Yan]{Kesong Yan*}
\address{School of Mathematics and Statistics, Hainan Normal
University,  Haikou, Hainan, 571158, P. R. China}
\email{ksyan@mail.ustc.edu.cn}

\author[F.~Zeng]{Fanping Zeng}
\address{School of Mathematics and Quantitative
Economics, Guangxi University of Finance and Economics, Nanning, Guangxi, 530003, P. R. China}
\email{fpzeng@gxu.edu.cn}

\begin{abstract}
The bounded orbit conjecture says that every homeomorphism on the
plane with each of its orbits being bounded must have a fixed point.
Brouwer's translation theorem asserts that the conjecture is true
for orientation preserving homeomorphisms, but Boyles'
counterexample shows that it is false for the orientation reversing
case. In this paper, we give a more comprehensible construction of
counterexamples to the conjecture. Roughly speaking,
 we construct an orientation reversing homeomorphisms $f$ on the square $J^2=[-1, 1]^2$ with
$\omega(x, f)=\{(-1, 1), (1, 1)\}$ and $\alpha(x, f)=\{(-1, -1), (1,
-1)\}$ for each $x\in (-1, 1)^2$. Then by a semi-conjugacy defined
by pushing an appropriate part of $\partial J^2$ into $(-1, 1)^2$,
$f$ induces a homeomorphism on the plane, which is a counterexample.
\end{abstract}

\keywords{fixed point, periodic point, plane homeomorphism, bounded orbit,
$\omega$-limit set, $\alpha$-limit set}

\subjclass[2010]{37E30}

\maketitle

\pagestyle{myheadings} \markboth{J. Mai, E. Shi, K. Yan and F.
Zeng}{A new construction of counterexamples to the bounded orbit
conjecture}

\section{Introduction}

In 1912, Brouwer \cite{Bro12b} obtained his translation theorem,
which has been a fundamental tool in studying surface homeomorphisms nowadays (see e.g. \cite{Ca06}).

\begin{thm}[Brouwer translation theorem]
If $f:\mathbb R^2\rightarrow \mathbb R^2$ is an orientation preserving homeomorphism and has no fixed point, then $f$
is a translation, that is each orbit of $f$ is unbounded.
\end{thm}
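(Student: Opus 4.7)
The plan is to argue by contradiction along Brouwer's classical route: assuming $f$ is orientation preserving, fixed-point free, but has some bounded orbit, I would produce a fixed point. The two main ingredients are the translation-arc construction and the resulting Brouwer lines. First I would establish the auxiliary fact that a fixed-point-free orientation-preserving homeomorphism of $\mathbb{R}^2$ has no periodic points at all: if $f^n(x)=x$ with $n\geq 2$ minimal, then the Brouwer fixed point theorem applied to a suitable topological disk enclosed by a Jordan curve built from a short path connecting $x$ to $f(x)$ and its iterates — together with the orientation-preserving hypothesis, used via local degree/index theory — yields a fixed point of $f$, a contradiction.

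Next I would prove the translation-arc lemma: for each $x$, there is an arc $\alpha$ from $x$ to $f(x)$ with $\alpha\cap f(\alpha)=\{f(x)\}$. This is a local construction near the arc together with a careful modification to remove accidental intersections, using the no-periodic-point lemma to ensure the iterates $f^k(\alpha)$ do not return to $\alpha$. Concatenating, I would show $L:=\bigcup_{k\in\mathbb{Z}} f^k(\alpha)$ is a properly embedded copy of $\mathbb{R}$ in the plane — a \emph{Brouwer line} through $x$ — on which $f$ acts as a shift by one fundamental domain. By the Jordan curve theorem, $L$ separates $\mathbb{R}^2$ into two components $U^+,U^-$, and orientation preservation forces $f(U^+)\subsetneq U^+$ (or the analogous containment), so the translates $\{f^n(L)\}_{n\in\mathbb{Z}}$ are pairwise disjoint and nested.

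From this dynamical picture the conclusion is immediate: the orbit of $x$ meets each fundamental domain of $L$ in exactly one point, and the fundamental domains exit every compact set because properly embedded disjoint lines in $\mathbb{R}^2$ cannot accumulate; hence $\{f^n(x)\}$ is unbounded in both directions. The main obstacle I anticipate is the technical step of producing a genuine translation arc and upgrading it to an embedded line — in particular, ruling out accumulation of the iterates $f^k(\alpha)$. This is exactly the spot where orientation preservation is indispensable, since without it one could use Boyles' construction (referenced in the abstract) to get bounded orbits with no fixed point.
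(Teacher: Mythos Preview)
The paper does not actually prove this theorem: it is quoted as a classical background result, and the authors simply refer the reader to \cite{Bro84, Fa87, Fr92, Gui94, Ca05} for proofs. So there is no ``paper's own proof'' to compare against.

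Your outline is the standard translation-arc route taken in several of those references (notably \cite{Bro84, Fa87, Fr92}), and the overall strategy is sound. Two remarks. First, for the bare conclusion ``every orbit is unbounded'' you do not need the full Brouwer-line machinery: once you know $f$ has no periodic points, the free-disk lemma (any open disk $D$ with $f(D)\cap D=\emptyset$ has all iterates $f^n(D)$ pairwise disjoint) already forces every orbit to leave each compact set, since infinitely many disjoint translates of a fixed disk cannot sit in a bounded region. This bypasses the delicate step of showing $L=\bigcup_k f^k(\alpha)$ is \emph{properly} embedded. Second, in your sketch of the no-periodic-point step, the argument is not via ``the Brouwer fixed point theorem applied to a suitable topological disk''; the usual proofs use Lefschetz/index considerations (an orientation-preserving homeomorphism with an isolated periodic orbit of period $n\ge 2$ would have nonzero fixed-point index for some iterate, forcing a fixed point of $f$ itself), or the translation-arc lemma directly. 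As written, that step is a bit hand-wavy.
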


Some simple modern  proofs or extensions of the theorem can be found in
\cite{ Bro84,  Fa87, Fr92, Gui94, Ca05}. The following  example
shows that Brouwer translation theorem does not hold for orientation
reversing homeomorphisms (see \cite{Bo81}).

\begin{exa}\label{contex-Brouwer}
Let $h:\mathbb R^2\rightarrow \mathbb R^2, (x, y)\mapsto (-x, y)$ be the reflection
across $y$-axis. Let $g:\mathbb R^2\rightarrow \mathbb R^2$ be defined by $g(x, y)=(x, y-|x|+1)$
for $|x|<1$ and $g(x, y)=(x, y)$ for $|x|\geq 1$.
Then $g\circ h$ is an orientation reversing homeomorphism and has no fixed point, but each point $(x, y)$ with
$|x|\geq 1$ is a $2$-periodic point; specially, the orbit of $(x, y)$ is bounded.
\end{exa}

Note that the homeomorphism $g\circ h$ in Example \ref{contex-Brouwer}  has many points with an unbounded orbit,
say the points in the strip determined by $|x|<1$. This naturally leads to the following  conjecture.

\begin{conj}[Bounded orbit conjecture]
If $f:\mathbb R^2\rightarrow \mathbb R^2$ is a homeomorphism and each orbit of $f$  is bounded, then $f$
has a fixed point.
\end{conj}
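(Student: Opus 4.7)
Since the conjecture is known to be false (Boyles \cite{Bo81}), the aim is to construct a new counterexample along the two-step scheme outlined in the abstract: build a model homeomorphism on the compact square $J^2$, then transport it to the plane by a semi-conjugacy that ``pushes'' part of $\partial J^2$ into the interior.

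\textbf{Stage~1.} I would first construct an orientation-reversing homeomorphism $f:J^2\to J^2$ whose only periodic points are the four corners, realised as two $2$-cycles $\{(-1,1),(1,1)\}$ and $\{(-1,-1),(1,-1)\}$, and for which every interior orbit has $\omega$-limit equal to the top cycle and $\alpha$-limit equal to the bottom cycle. A concrete candidate is a skew-product $f(x,y)=(\theta(x,y),\psi(y))$, where $\psi$ is an increasing homeomorphism of $[-1,1]$ with $\psi(y)>y$ on $(-1,1)$ (so that $y$-coordinates monotonically ascend to $1$), and for each $y$ the slice map $x\mapsto\theta(x,y)$ is an orientation-reversing self-homeomorphism of $[-1,1]$ sending $\pm 1 \mapsto \mp 1$. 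Choosing $\theta$ so that $|\theta(x,y)|\to 1$ as $y\to 1$ (and symmetrically for $f^{-1}$ near $y=-1$) then forces every interior orbit to zigzag from the bottom corners up to the top corners, giving the required $\omega$- and $\alpha$-sets. The verification reduces to monotonicity of $\psi$ and the boundary behaviour of $\theta$.

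\textbf{Stage~2.} To produce $F:\mathbb R^2\to\mathbb R^2$ from $f$, I would choose four pairwise disjoint embedded arcs in $(-1,1)^2$, one associated to each edge of $\partial J^2$ and joining the two corresponding corners, and define a quotient map $\pi:J^2\to X$ that collapses each open edge onto its partner arc by a homeomorphism (and is the identity elsewhere). The arcs and collapse homeomorphisms are to be chosen so that (i) $X$ is homeomorphic to $\mathbb R^2$, and (ii) the identifications are $f$-equivariant, i.e.\ the boundary action of $f$ factors through $\pi$. Granted this, $F:=\pi f\pi^{-1}$ is an orientation-reversing homeomorphism of $X\cong\mathbb R^2$; every $F$-orbit is bounded, since it is the image under $\pi$ of an $f$-orbit lying in the compact set $J^2$; and the only periodic points of $F$ are the two $2$-cycles inherited from the corners, so $F$ has no fixed point, yielding the desired counterexample.

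\textbf{Main obstacle.} The crux is compatibility between Stages~1 and~2: the restriction of $f$ to each edge of $\partial J^2$ must be conjugate, via the collapse, to the restriction of $F$ to the corresponding interior arc, and simultaneously the quotient must yield the plane rather than a pinched or singular surface. This couples the choice of $\theta,\psi$ near $\partial J^2$ to the choice of arcs and collapse maps, and is where one must check that no unwanted fixed points are created at the seams of the identification. Once these compatibilities are engineered, the counterexample properties (boundedness of orbits, absence of fixed points, prescribed $\omega$ and $\alpha$) follow directly from the Stage~1 dynamics.
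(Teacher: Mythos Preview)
Your two-stage outline is the right shape, but Stage~2 as written cannot work, and a detail in Stage~1 that you gloss over is exactly what forces the paper's construction to look the way it does.

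First, a purely topological obstruction: you take a continuous surjection $\pi:J^2\to X$ and ask that $X\cong\mathbb R^2$. Since $J^2$ is compact and continuous images of compact spaces are compact, $X$ is compact, hence never homeomorphic to $\mathbb R^2$. So ``collapse the four edges onto interior arcs and get the plane'' is impossible as stated; at some point you must restrict to an open subset before conjugating to $\mathbb R^2$.

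Second, and more seriously, your Stage~1 assertion that ``the only periodic points are the four corners'' is false for any skew product of the form $f(x,y)=(\theta(x,y),\psi(y))$ with $\psi(\pm1)=\pm1$. On the top edge $y=1$ the map restricts to $x\mapsto\theta(x,1)$, an orientation-reversing self-homeomorphism of $[-1,1]$; by the intermediate value theorem such a map has a fixed point in $(-1,1)$, so $f$ has a genuine fixed point on $J\times\{1\}$ (and likewise on $J\times\{-1\}$). Any $f$-equivariant identification of the top edge with an interior arc, via a homeomorphism as you propose, carries that fixed point to a fixed point of the induced map $F$, destroying the counterexample. This is precisely the ``unwanted fixed points at the seams'' issue you flag as the main obstacle, and it is not a matter of careful engineering: it is unavoidable with your identification scheme.

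The paper resolves both points at once. Its semiconjugacy $\xi:J^2\to J^2$ pushes only the four \emph{vertical} half-edges into the interior (each collapsing to a single point $(\pm\tfrac12,0)$), while the top and bottom edges --- where the unavoidable fixed points $(0,\pm1)$ sit --- are kept on $\partial J^2$. One then restricts the induced $g=\xi f\xi^{-1}$ to the open square and conjugates that to $\mathbb R^2$ by $(r,s)\mapsto(\tan(\pi r/2),\tan(\pi s/2))$; the horizontal edges and their fixed points go off to infinity and disappear. The price is that the collapsed vertical half-edges, now lying in the interior, become two rays of $2$-periodic points in $\mathbb R^2$. Your scheme, by contrast, aims for only finitely many periodic orbits; that this can be achieved at all is exactly the open Problem~\ref{prob:3-3}.
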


In 1981,  Boyles \cite{Bo81} solved this long outstanding conjecture by constructing
a counterexample. Checking this  counterexample carefully, we found that
it is essentially a very clever improvement of Example \ref{contex-Brouwer}.
The core idea of the construction is to force each point in the strip $|x|<1$
to move upwards as well as to the left and right, so that its limit sets lie in
the vertical lines $|x|=1$. In order to achieve this construction process,
the author divided the strip $|x|<1$ into infinitely many irregular blocks, and carefully
adjusted the distances of movements of each block to ensure the convergence of relevant series.
These intricate constructions brought the readers some difficulties in understanding
why such counterexamples do exist.
\medskip

The aim of the paper is to give a more comprehensible construction of
 counterexamples. The style of our construction is more topological rather than
 geometrical, so that we can avoid the difficulties caused by irregularity of blocks and possible divergence of series
 appearing in \cite{Bo81}. In addition, the ideas of our construction are also available to
 other problems around plane homeomorphisms (see e.g. \cite{MSYZ24}).
 \medskip

In Section 3, we construct an orientation reversing homeomorphisms
$f$ on the square $J^2=[-1, 1]^2$ with $\omega(x, f)=\{(-1, 1), (1,
1)\}$ and $\alpha(x, f)=\{(-1, -1), (1, -1)\}$ for each $x\in (-1,
1)^2$ (see Lemma \ref{lem:2-2} for the details). Then by a
topological semi-conjugacy defined by pushing an appropriate part of
$\partial J^2$ into $(-1, 1)^2$, $f$ induces a homeomorphism on the
the square, and then induces a homeomorphism on the plane by a
topological conjugacy, which meets our requirements. Explicitly, we
get the following theorem.

 \begin{thm} \label{main-thm}
There exists an orientation reversing fixed point free homeomorphism $h: \mathbb{R}^2
\rightarrow \mathbb{R}^2$ such that:

 \begin{enumerate}
\item The set of all periodic points of $h$ consists of $2$-periodic points and is equal to
the subset $(-\infty, -1]\cup [1, \infty)$ of $x$-axis.

\item For each non-periodic point $x$,  the orbit of $x$ is
bounded and the limit sets $\omega(x, h)=\alpha(x, h)=\{(-1, 0), (1, 0)\}$.
\end{enumerate}
\end{thm}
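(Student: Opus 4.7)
The strategy is the two-step reduction sketched by the authors in the introduction. First, I would invoke Lemma 2.2 from Section 3, which furnishes the orientation-reversing homeomorphism $f:J^2\to J^2$ whose only periodic orbits are the two $2$-cycles $\{(-1,1),(1,1)\}$ and $\{(-1,-1),(1,-1)\}$, and for which every $x\in(-1,1)^2$ satisfies $\omega(x,f)=\{(-1,1),(1,1)\}$ and $\alpha(x,f)=\{(-1,-1),(1,-1)\}$. This compact model concentrates the entire dynamical content in the four corners and is the seed for $h$.

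Second, I would produce the semi-conjugacy $\pi:J^2\to J^2$, realized as a continuous surjection whose nontrivial identifications are supported on $\partial J^2$, chosen so that (a) $f$ descends to a homeomorphism $g:J^2\to J^2$ with $g\circ\pi=\pi\circ f$, and (b) certain boundary arcs of $\partial J^2$ are ``pushed into'' the interior in the sense that $\pi$ maps them onto two disjoint simple arcs $A_-,A_+$ of $J^2$, each $A_\pm$ having a distinguished endpoint $q_\pm\in(-1,1)^2$ in the interior and the other endpoint on $\partial J^2$. The identifications are arranged so that $g$ swaps $A_-$ with $A_+$ and $q_-$ with $q_+$, making every point of $A_-\cup A_+$ a $2$-periodic point of $g$; the corner-attracting/repelling behavior of $f$ descends to $\omega(y,g)=\alpha(y,g)=\{q_-,q_+\}$ for every non-periodic $y\in J^2$. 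Compatibility with $f$ is ensured by the way $f$ permutes the corners and swaps the vertical edges of $J^2$, as built into Lemma 2.2.

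Third, since $g$ is a homeomorphism of $J^2$ it preserves the open square $(-1,1)^2$, and I would fix an orientation-preserving homeomorphism $\Phi:(-1,1)^2\to\mathbb{R}^2$ sending $q_-$ to $(-1,0)$, $q_+$ to $(1,0)$, and the half-open arcs $A_-\cap(-1,1)^2$ and $A_+\cap(-1,1)^2$ onto the rays $(-\infty,-1]\times\{0\}$ and $[1,\infty)\times\{0\}$ respectively. The conjugate $h=\Phi\circ g|_{(-1,1)^2}\circ\Phi^{-1}$ is then a homeomorphism of $\mathbb{R}^2$ inheriting directly from $g$: it is orientation-reversing (since $\Phi$ is orientation-preserving and $g$ is orientation-reversing), fixed-point-free (since $f$ is), has periodic set $((-\infty,-1]\cup[1,\infty))\times\{0\}$ consisting entirely of $2$-periodic points, and satisfies $\omega(x,h)=\alpha(x,h)=\{(-1,0),(1,0)\}$ for every non-periodic $x$; in particular each non-periodic orbit is bounded, since its closure consists only of the orbit itself together with the two limit points.

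The principal obstacle is the explicit construction and verification in the second step. One must define the identifications on $\partial J^2$ precisely and compatibly with $f$, so that the resulting descent is a bona fide homeomorphism rather than merely a continuous surjection, and so that the image arcs $A_\pm$ are genuine simple arcs with the asserted topological configuration in the target $J^2$. One also has to verify that no spurious accumulation points appear in $\omega(y,g)$ or $\alpha(y,g)$ for non-periodic $y$ under the descent---this uses the precise statement of Lemma 2.2, namely that the interior orbits of $f$ converge to the corners themselves rather than merely to arbitrary nearby boundary points.
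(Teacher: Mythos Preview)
Your three-step outline is exactly the paper's proof: invoke Lemma~3.1 for $f$, semi-conjugate via a map $\xi$ (your $\pi$) that collapses pieces of $\partial J^2$ so that the image of the boundary contains two horizontal segments reaching into the interior (the paper's $[v_5,v_9]$ and $[v_0,v_6]$, your $A_\pm$), then conjugate the restriction to the open square onto $\mathbb{R}^2$.

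One correction, however: $f$ is \emph{not} fixed-point-free, and its periodic set is not just the two corner $2$-cycles. Lemma~3.1 prescribes $f|\partial J^2=\Psi f_{02}|\partial J^2$, so $(0,1)$ and $(0,-1)$ are fixed and every point of the top and bottom edges $J\times\{\pm 1\}$ is periodic of period $1$ or $2$. The induced map $g$ accordingly has fixed points $v_7=(0,1)$ and $v_8=(0,-1)$ on $\partial J^2$. The map $h$ is fixed-point-free only because these two fixed points lie on the boundary and are discarded upon restriction to $(-1,1)^2$, not because $f$ lacked fixed points to begin with. This does not damage your overall argument, but the parenthetical ``since $f$ is'' in your third paragraph is wrong and the justification there should be rephrased.
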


Noting that both the homeomorphism given in \cite{Bo81} and that
given in Theorem \ref{main-thm}  have infinitely many periodic points, the following problem is left.

\begin{prob} \label{prob:3-3}
Does there exist  an  orientation  reversing fixed point free homeomorphism on the
plane which has no unbounded orbit and
has only finitely many periodic orbits?
\end{prob}

\section{Definitions and notations}

Let $\mathbb Z$, $\mathbb Z_+$, $\mathbb Z_-$, and $\mathbb N$ be
the sets of integers, nonnegative integers, nonpositive integers,
and positive integers, respectively. For any
\;\!$n\in\mathbb{N}$\;\!, \,write \;\!$\mathbb{N}_n=\{1,\cdots ,
n\}$. For\ \ $r\,,\,s\,\in\,\mathbb R$\,,\ \,we use \
$(r,s)$ \ to denote a point in \,$\mathbb R^{\,2}$\,.\ \ If \
$r\,<\,s$\;,\ \ we also use \ $(r,s)$\ \ to denote an open interval
in\ \,$\mathbb R$.\,\,\,These will not lead to confusion\,.\,\,\,For
example\,, \,if we write \ $(r,s)\,\in\,X$\,, \ then \ $(r,s)$ \
will be a point\,; \ if we write \ $t\,\in\,(r,s)$\,,\ \ then \
$(r,s)$ \ will be a set\,,\ \,and hence is an open interval\,.

\medskip

Let $X$ be a  metric space and $f:X\rightarrow X$ be a
homeomorphism. Let $f^{\;\!0}$ be the identity map of $X$, and let
$f^{\;\!n}=f\circ f^{\;\!n-1}$ be the composition map of $f$ and
$f^{\;\!n-1}$. For $x \in X$, the sets $O(x,
f)\equiv\{f^n(x):n\in\mathbb Z\}$, $O_+(x,
f)\equiv\{f^n(x):n\in\mathbb Z_+\}$, and $O_-(x,
f)\equiv\{f^{-n}(x):n\in\mathbb Z_-\}$ are called the {\it orbit},
{\it positive orbit}, and {\it negative orbit} of $x$, respectively.
A point $x \in X$ is called a {\it fixed point} \,of $f$ if
\;\!$f(x)=x$\;\!. We call\,$x\in X$  a {\it periodic
point}\, of $f$ if $f^{\,n}(x)=x$ \,for some $n\in\mathbb{N}$\;\!;
the smallest such $n$ is called the {\it period}\, of \,$x$ under
$f$. A periodic point of period $n$ is also called an {\it
$n$-periodic point}. Denote by $\mathrm{P}(f)$ the set of periodic
points of $f$. For $x, y\in X$, if there is a sequence of positive
integers $n_1<n_2<\cdots$ such that $f^{n_i}(x)\rightarrow y$  then
we call $y$ an {\it{{$\omega$-limit point}}} of $x$. We denote by
$\omega(x, f)$ the set of all $\omega$-limit points of $x$ and call
it the {\it $\omega$-limit set} of $x$. If $y \in \omega(x,
f^{-1})$, then we call $y$ an {\it $\alpha$-limit point} of $x$. We
denote by $\alpha(x, f)$ the set of all $\alpha$-limit points of $x$
and call it the {\it $\alpha$-limit set} of $x$.
\medskip

Let \ $X$ \ and \ $Y$ \ be topological spaces\,, \,and \
$\beta:X\,\rightarrow\,X$ \ and \ $\gamma:Y\,\rightarrow\,Y$ \ be
continuous maps\,.\ \,If there exists a continuous surjection
(\,resp. a homeomorphism) \ $\eta:X\,\rightarrow\,Y$\ \ such that \
$\eta\beta\;=\;\gamma\eta$\ , \ then\ \ $\beta$\,\,\ and\ \
$\gamma$\ \ are said to be\ \,{\it topologically semi-conjugate}
(\,resp.\ \,{\it topologically conjugate})\ ,\ \,and\ \ $\eta$\ \ is
called a\ \,{\it topological semi-conjugacy} (\;resp. a {\it
topological conjugacy})\,\ from \ $\beta$\ \ to \ $\gamma$ .

\medskip

The following lemma is well known\,(see e.g. \cite[Lemma 3.2]{MSYZ24}).

\begin{lem} \label{lem:2-1} Let\ \ $\beta:X\,\rightarrow\,X$\ \ and \ $\gamma:Y\,\rightarrow\,Y$ \
with a topological semi-conjugacy \ $\eta:X\,\rightarrow\,Y$\ \ be
as above\;. \ If \;both\ \ \,$X$\ \,and\ \ \,$Y$\ \,are compact
metric spaces\;, \,then \

\vspace{1mm}$(1)$\ \ \ For \,any \ $x\;\in\;X$\,, \ the \
$\omega$-limit set \,\ $\omega\big(
\eta(x)\,,\,\gamma\big)\,=\,\;\eta\big(\omega(x\,,\,\beta)\big)\ ;$

\vspace{1mm}$(2)$ \ \ If \;both \ $\beta$ \ and \ $\gamma$ \,are
\,homeomorphisms\,, \ then , \ for \,any \ $x\;\in\;X$\,, \ the
\,$\alpha$-limit set\ \ \,$\alpha\big(\eta(x)\,,\,\gamma\big)
\,=\,\;\eta\big(\alpha(x\,,\,\beta)\big)$\ .
\end{lem}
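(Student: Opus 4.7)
The plan is to establish each equality in Lemma~\ref{lem:2-1} by a double inclusion, using only the semi-conjugacy identity $\eta\beta = \gamma\eta$ (which iterates to $\eta\beta^{\,n} = \gamma^{\,n}\eta$ for every $n\in\mathbb{Z}_+$), the continuity of $\eta$, and the compactness of $X$. The two parts are structurally identical, so the main effort goes into $(1)$; part $(2)$ will then follow by applying $(1)$ to the inverse systems.

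For the inclusion $\eta(\omega(x,\beta))\subseteq\omega(\eta(x),\gamma)$ in part $(1)$, I would take $y\in\omega(x,\beta)$ realised along a strictly increasing sequence $n_1<n_2<\cdots$ with $\beta^{\,n_i}(x)\to y$, apply $\eta$, and invoke continuity to get $\eta(\beta^{\,n_i}(x))\to\eta(y)$; the iterated semi-conjugacy identity rewrites the left side as $\gamma^{\,n_i}(\eta(x))$, so $\eta(y)\in\omega(\eta(x),\gamma)$. For the reverse inclusion, I would start with $z\in\omega(\eta(x),\gamma)$ along $\gamma^{\,n_i}(\eta(x))\to z$ and use compactness of $X$ to extract a subsequence along which $\beta^{\,n_{i_k}}(x)$ converges to some $y\in X$. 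The indices still strictly increase, so $y\in\omega(x,\beta)$; continuity of $\eta$ together with $\eta\beta^{\,n_{i_k}}=\gamma^{\,n_{i_k}}\eta$ then forces $\eta(y)=z$, placing $z$ in $\eta(\omega(x,\beta))$.

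For part $(2)$, I would first observe that $\eta\beta=\gamma\eta$, combined with the bijectivity of $\beta$ and $\gamma$, yields $\eta\beta^{-1}=\gamma^{-1}\eta$, so $\eta$ is also a semi-conjugacy from $\beta^{-1}$ to $\gamma^{-1}$. Since $\alpha$-limit sets under $\beta$ (resp.\ $\gamma$) are by definition the $\omega$-limit sets under $\beta^{-1}$ (resp.\ $\gamma^{-1}$), part $(1)$ applied to the inverse systems gives the claim immediately. The only step demanding genuine care in the whole argument is the compactness extraction in the reverse inclusion: without compactness of $X$, the sequence $\beta^{\,n_i}(x)$ may fail to subconverge in $X$, and one could at best locate $z$ in the closure $\overline{\eta(O_+(x,\beta))}$ rather than in $\eta(\omega(x,\beta))$ itself. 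Everything else reduces to a direct continuity-and-identity manipulation.
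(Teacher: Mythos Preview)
Your argument is correct and is exactly the standard one: continuity of $\eta$ plus the iterated identity $\eta\beta^{\,n}=\gamma^{\,n}\eta$ gives the easy inclusion, compactness of $X$ gives the subsequence needed for the reverse inclusion, and part~(2) follows by applying part~(1) to $\beta^{-1},\gamma^{-1}$. Note, however, that the paper does not supply its own proof of this lemma at all; it simply records it as well known and cites \cite[Lemma~3.2]{MSYZ24}, so there is no argument in the paper to compare yours against.
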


We always write\ \ $J=\,[\,-1\,,\,1\,]$\;.\ \ \,
For any compact connected manifold $M$,\ \ denote by \,$\partial M$
\,the boundary\,, \,and by $\stackrel{\ \circ}{M}$ \,the interior of
\,$M$. \ Specially, \ we have \ $\partial J\,
=\,\{\,-1\,,\,1\:\!\}$\,,  \ $\stackrel{\
\circ}{J}\,=\,(\,-1\,,1\:\!)$\,, $\partial J^{\,2}=\{-1, 1\}\times J\ \cup\ J\times \{-1, 1\}$, \ and
\ $\stackrel{\circ}{J}$$^2=(-1, 1)^2$.

\medskip

Define the
homeomorphism \ $f_{01}\,:\,J\,\to\;J$\ \ by
$$
f_{01}(s)\,=\;
   \left\{\begin{array}{ll}(s+1)/2\,, & \mbox{\ \ \ \ \ if \ }\;0\,\le\,s\,\le1\;; \\
                        \!\;\;s+1/2\,, & \mbox{\ \ \ \ \ if \ \ }-1/2\,\le\,s\le\,0\;;  \\
                             \;2s+1\,, & \mbox{\ \ \ \ \ if \ }-1\,\le\,s\le\,-1/2\ .
\end{array}\right.
$$
Then we define the {\it standard vertical shift} \ $f_{02}\,:\,J^{\,2}\,\to\;J^{\,2}$  by\
, \ \,for any \ $(r,s)\,\in\,J^{\,2}$\ ,
$$
f_{02}(r,s)\ =\ \big(\,r\,,\,f_{01}(s)\,\big)\ .
$$
For any\ \ $s\,\in\,J$\,,\ \ \,write \ $J_s\,= \,J\times\{s\}$\;. \
A homeomorphism\ $\zeta:J^{\,2}\to\,J^{\,2}$
\ is said to be \,{\it normally rising} \ if
$$ \zeta(J_s)\;=\,f_{02}(J_s),
\hspace{5mm} \mbox{for \ any}\ \ \,s\,\in\,J.$$
Define the {\it level reflection} $\Psi: J^2 \rightarrow J^2$ by
$$\Psi(r, s)=(-r,s), \hspace{10mm} \mbox{for any\ } (r,s) \in J^2.$$
Then $\Psi^2$ is the identity on $J^2$, and for any $(r,s)\in J^2$, we have
$$\Psi f_{02}(r,s)=f_{02}\Psi(r,s)=(-r, f_{01}(s)),
$$
that is $\Psi f_{02}=f_{02}\Psi$.

\section{Normally rising orientation reversing homeomorphisms on $J^2$}

Write
$$v_1=(-1,1), \hspace{8mm} v_2=(1,1), \hspace{8mm} v_3=(-1,-1),
\hspace{8mm} v_4=(1,-1).$$ Then $v_1, v_2, v_3$ and $v_4$ are the
four vertices of the square $J^2$.

\begin{lem} \label{lem:2-2}
There exists a normally rising orientation reversing homeomorphism
$f: J^2 \rightarrow J^2$ such that $\omega(x, f)=\{v_1, v_2\}$ and
$\alpha(x, f)=\{v_3, v_4\}$ for any $x \in \stackrel{\
\circ}{J}$$^2$, and $f|\partial J^2=\Psi f_{02}|\partial J^2$.
\end{lem}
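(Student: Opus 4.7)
The plan is to realize $f$ in the fibered form
\[
f(r,s) \;=\; (\phi_s(r),\ f_{01}(s)),
\]
where $\{\phi_s\}_{s\in J}$ is a continuous family of orientation-reversing self-homeomorphisms of $J$, each satisfying $\phi_s(-1)=1$ and $\phi_s(1)=-1$, together with the endpoint conditions $\phi_{-1}=\phi_1=-\mathrm{id}$. For any such family one verifies directly that $f$ is a homeomorphism of $J^2$ (its inverse is $(r,s)\mapsto(\phi_{f_{01}^{-1}(s)}^{-1}(r),\,f_{01}^{-1}(s))$), that $f$ is orientation reversing (each $\phi_s$ reverses orientation on a horizontal fiber while $f_{01}$ preserves orientation on the base), that $f$ is normally rising by the shape of the formula, and that $f|\partial J^2=(\Psi f_{02})|\partial J^2$ directly from the conditions $\phi_s(\pm1)=\mp1$ and $\phi_{\pm1}=-\mathrm{id}$. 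Hence the real content of the lemma is the design of a family $\{\phi_s\}$ producing the prescribed limit sets.

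I would design $\phi_s$ so that it is \emph{expansive toward $\pm1$} for $s$ in the upper half $(0,1)$ (meaning $\pm1$ are attracting fixed points of $\phi_s^{\,2}$, so that forward orbits push their $r$-coordinates to $\pm1$) and \emph{contractive} for $s\in(-1,0)$ (so that $\phi_s^{-1}$ plays the expansive role and backward orbits push to $\pm1$), with smooth degeneration $\phi_s\to-\mathrm{id}$ as $s\to\pm1$. A concrete realisation is the three-piece piecewise linear family: take $\phi_s$ to be the orientation-reversing PL homeomorphism of $J$ with symmetric break-points at $(\pm b(s),\mp c(s))$, where $b,c\colon J\to[0,1)$ are continuous, with $b(\pm1)=c(\pm1)=0$, $c(s)>b(s)$ for $s\in(0,1)$, and $c(s)<b(s)$ for $s\in(-1,0)$. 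Then $(\phi_s^{\,2})'(\pm1)=\bigl((1-c(s))/(1-b(s))\bigr)^2$, which is $<1$ precisely when $c(s)>b(s)$. A small asymmetric perturbation may also be inserted so that the interior fixed point of $\phi_s$ is non-constant in $s$, preventing any orbit from being trapped in the middle.

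Given such a family, fix $(r_0,s_0)\in(-1,1)^2$ and set $s_n=f_{01}^n(s_0)$, $r_{n+1}=\phi_{s_n}(r_n)$. Continuity together with $\phi_s(\pm1)=\mp1$ forces the sign of $r_n$ to alternate once $|r_n|$ is close to $1$, so the task reduces to showing $|r_n|\to1$. Near $r=1$ the chain rule gives
\[
|r_{n+2}-1| \;\leq\; \bigl((1-c(s_n))/(1-b(s_n))\bigr)^{2}\,|r_n-1| \;+\; o(|r_n-1|),
\]
so $|r_n|\to1$ reduces to $\prod_n\bigl((1-c(s_n))/(1-b(s_n))\bigr)^{2}=0$, that is $\sum_n\bigl(c(s_n)-b(s_n)\bigr)=\infty$. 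Since $f_{01}$ approaches $1$ at a geometric rate ($1-s_n\asymp 2^{-n}$), the divergence is secured by taking $c(s)-b(s)\sim 1/\log(1/(1-|s|))$ near $s=\pm1$, which makes the summand $\asymp 1/n$. The initial approach of $r_n$ into a neighbourhood of $\pm1$ follows from the expansion being bounded away from $1$ on compact subsets of $(0,1)$, where the PL family is uniformly expansive. Combining these facts gives $\omega((r_0,s_0),f)=\{v_1,v_2\}$; the mirror argument applied to $f^{-1}$, using the contractive regime on the lower half, yields $\alpha((r_0,s_0),f)=\{v_3,v_4\}$.

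The main obstacle is precisely the quantitative divergence $\sum_n(c(s_n)-b(s_n))=\infty$: the per-step expansion factor $(1-c(s_n))/(1-b(s_n))$ tends to $1$ (because $\phi_s\to-\mathrm{id}$ at the endpoints), and the geometric speed of $s_n\to\pm1$ would make any polynomial-in-$(1-|s|)$ degeneration of $c(s)-b(s)$ sum to a finite amount. One must therefore match the design rate of vanishing of $c(s)-b(s)$ at $s=\pm1$ carefully against the intrinsic rate of $f_{01}$, choosing a sub-polynomial degeneration such as $1/\log(1/(1-|s|))$ so that the cumulative push to the corners is infinite while $\phi_s$ still extends continuously to the boundary as the reflection $-\mathrm{id}$.
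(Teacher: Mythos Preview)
Your fibered ansatz $f(r,s)=(\phi_s(r),f_{01}(s))$ coincides with the form the paper uses, but the design of $\{\phi_s\}$ is genuinely different. You pursue a \emph{multiplicative} mechanism: make $\pm1$ attracting for $\phi_s^{2}$ with contraction factor $\bigl((1-c(s))/(1-b(s))\bigr)^{2}$, and tune the degeneration $c(s)-b(s)\sim 1/\log(1/(1-|s|))$ so that $\sum_n(c(s_n)-b(s_n))=\infty$ against the geometric speed $1-s_n\asymp 2^{-n}$. The paper instead uses an \emph{additive block} mechanism that avoids any infinite-product estimate. It takes $\phi_s(r)=\varphi_{n(i)}(-r)$ on the even-indexed horizontal strips $D_i$ and $\phi_s(r)=-r$ on the odd ones, where the strips are grouped into blocks $\mathbb N(n)=\{n(n{+}1),\dots,(n{+}1)(n{+}2)-1\}$ and $\varphi_n$ is essentially the translation $r\mapsto r+2b_n/n$ on $[-b_n,\,b_n-2b_n/n]$ for a sequence $b_n\nearrow1$. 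Then for even $i$ one has $r_{i+2}=\varphi_{n(i+2)}(r_i)>r_i$, so the even-indexed abscissae are strictly increasing from the outset; and because the block $\mathbb N(m)$ provides $m$ applications of $\varphi_m$, a comparison orbit started at $-b_m$ reaches $\varphi_m^{\,m}(-b_m)=b_m>1-\varepsilon$ by the end of the block. Monotonicity then forces every interior orbit past $1-\varepsilon$. Convergence to the corners is built into the combinatorics, with no rate-matching needed; the $\alpha$-limit statement is obtained by conjugating with the vertical reflection $\Psi_v$.

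Your plan has a genuine gap at the ``escape from the middle''. With the symmetric PL family every $\phi_s$ fixes $0$, so the point $(0,s_0)$ satisfies $r_n\equiv0$ and has $\omega$-limit $\{(0,1)\}\ne\{v_1,v_2\}$. You propose an unspecified ``asymmetric perturbation'' making the interior fixed point of $\phi_s$ non-constant in $s$, but non-constancy alone is not sufficient: the cocycle $r_{n+1}=\phi_{s_n}(r_n)$ may still shadow the moving fixed point if the repulsion rate $(c(s)/b(s))^2$ tends to $1$ faster than the fixed point drifts, and your continuity requirement $\phi_s\to-\mathrm{id}$ forces exactly $c(s)/b(s)\to1$ as $s\to1$. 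Relatedly, the sentence ``the initial approach \ldots\ follows from the expansion being bounded away from $1$ on compact subsets of $(0,1)$'' does not apply as written, since $s_n$ leaves every such compact set after finitely many steps. To close the argument you would need a quantitative escape lemma (e.g.\ that $|r_N|\ge\delta$ for some fixed $\delta>0$ before $s_N$ gets too close to $1$), after which $b(s_n)\to0$ places you in the outer linear pieces and your product estimate finishes. The paper's choice $\varphi_n(r)>r$ on all of $(-1,1)$ sidesteps this entirely: the second iterate $r_i\mapsto r_{i+2}$ has no interior fixed point, so monotonicity holds from the very first step and no separate escape argument is required.
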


\begin{proof}
Take $a_1=1/4>a_2>a_3> \cdots >0$ such that  $a_n \rightarrow 0$ as
$n \rightarrow \infty$. Let
$$D_0 = J \times [0, 1/2]\;\,,\ \ \ \ B_{0n} = J \times [a_n , 1/2]\;\,,\ \ \ \
F_{0n} = J \times [0 , a_n]\;\,.$$ Then\, $D_0 = B_{0n} \cup F_{0n}$
for any\, $n\in \mathbb{N}$\ ,\ and\ $B_{01} \subset B_{02} \subset
B_{03}\subset \cdots$\ and\ $F_{01} \supset F_{02} \supset F_{03} \supset \cdots$.
For any\ \,$i \in \mathbb{Z}$\ ,\ \,let
$$D_i = f_{02}^i(D_0)\;\,,\ \ \
\ B_{in} = f_{02}^i(B_{0n})\;\,,\ \ \ \ F_{in} =
f_{02}^i(F_{0n})\;\,.$$ Then\ \, $D_{i-1} \cap D_i = f_{02}^i(J_0)$
,\ \ and \ \,$D_i = B_{in} \cup F_{in}$\ \,for any\ \,$n\in
\mathbb{N}$\ \ and any\ \,$i \in \mathbb{Z}$\ .\ \ Write
$$R_0 = J \times [0,1]\ ,\ \ \ \ \ R_1 = J \times [1/2, 1]\ .$$
Then\ \ $R_0 = D_0 \cup R_1$\ ,\ \ $R_1 = \bigcup \{D_i : i \in
\mathbb{N}\} \cup J_1$\ , \ \ and\ \ $\Psi f_{02}(R_0) = f_{02}(R_0)
= R_1$\ .

\medskip

Take\ \,$b_1 = 1/2 < b_2 < b_3 < \cdots < 1$\ \,such that\ \, $b_n
\rightarrow 1$\ \,as\ \, $n\rightarrow \infty$\ .\ \ For any\ \,$n
\in \mathbb{N}$ ,\ \,take a homeomorphism\ \, $\varphi_n : J
\rightarrow J$\ \,such that\ \,
$$0 < \varphi_n(r) - r \leq 2b_n /n \ \ \ \
\mbox{for any}\ \ r \in\ \stackrel{\ \circ}{J}\,$$ and
$$\varphi_n(r) - r = 2b_n /n\ \ \ \ \mbox{for any}\ \ r \in [-b_n ,\ \,b_n-2b_n
/n].$$Then\ \, $\varphi_n^n(-b_n) = b_n$\ \ and\ \ $\varphi_n^n([-
b_n , 1]) = [b_n , 1]$.

\medskip

For any\ \,$n \in \mathbb{N}$\ ,\ \,write\ \,$k(n) = n(n+1)$\ .\
\,Then\ \,$k(n)$\ \,is an even number ,\ \,$k(1) = 2$\ ,\ \,$k(2) =
6$\ ,\ \,$k(3) = 12$\ ,\ \, $\cdots$ \ \,,\ \,and\ \,$k(n +1) = k(n)
+2n +2$\ .\ \ \ Let $$\mathbb{N}(n)=\{k(n)\ ,\;\,k(n)+1\ ,\;\,
\cdots \ ,\;\,k(n+1)-1\}.$$ Then\ \,$\mathbb{N}(n)$\ \,is a subset
of\ \,$\mathbb{N}$\ \,containing\ \,$2n + 2$\ \,elements ,\ \,and\
\,$\{ \mathbb{N}(n) : n\in \mathbb{N}\}$\ \,is a partition of\;\,
$\mathbb{N}-\{1\}$ .

\medskip

For any given\ \,$i \in \mathbb{N}- \{1\}$\ ,\ \, let
$$n(i)=\max\{m \in \mathbb{N} : k(m) \leq i \}.$$
Then\, $n(i)$\ \,is the number in\ \,$\mathbb{N}$\ \,such that\ \,$i
\in \mathbb{N}(n(i))$ ,\ \,and we have\ \,$n(i)=1$\ \,for\ \,$i =2,
3 , 4 , 5$ ,\ \,$n(i)=2$ \,for\ \,$i=6 , \cdots, 11$ ,\ \ \,$n(i) =
3$\ \,for\ \,$i = 12 , \cdots , 19$ ,\ \ \ and so on .

\medskip

Clearly, we can uniquely define a homeomorphism\ \, $\Phi: R_1
\rightarrow R_1$\ \,by the following conditions :

\medskip

$(\Phi.1)$\ \  $\Phi|(D_1 \cup \partial R_1)=id$\ ,\ \ and\ \
$\Phi(J_s) = J_s$\ \ for any\ \ $s \in [1/2 , 1]$\ .

\medskip

$(\Phi.2)$\ \ For any\ \ $i \in \mathbb{N} - \{1\}$\ \ and any\ \
$(r, s) \in B_{i n(i)}$\ ,\ \ if\;\,$i$\;\,is odd then\ \ $\Phi(r ,
s) = (r , s)$\ ;\ \ if\;\,$i$\;\,is even then\ \ $\Phi(r , s)
=(\varphi_{n(i)}(r) , s)$.

\medskip

$(\Phi.3)$\ \ For any\ \ $i \in \mathbb{N} - \{1\}$\ \ and any\ \ $r
\in J$\ ,\ \ $\Phi|((\{r\}\times J) \cap F_{in(i)} )$\ \ is linear .

\medskip

Note that in the above definition of\ \, $\Phi : R_1 \rightarrow
R_1$\ , \ \,we divide\;\,$R_1$\;\,in the manner shown by the
following equality :\ \
\begin{align*}
R_1 &=\, D_1 \cup D_2 \cup \cdots \cup D_i \cup D_{i+1} \cup \cdots \cup J_1\\
&= \, D_1 \cup F_{2 n(2)} \cup B_{2 n(2)} \cup \cdots \cup F_{i
n(i)} \cup B_{i n(i)} \cup F_{i+1, n(i+1)} \cup B_{i+1, n(i+1)} \cup
\cdots \cup J_1\ ,
\end{align*}
and the definition of\;\, $\Phi|B_{in(i)}$\;\,is directly given by
means of\;\, $\varphi_{n(i)}$\;\,if\;\,$i$\;\,is even ,\ \ and the
definition of\;\, $\Phi|F_{in(i)}$\;\,is given by means of\;\,
$\Phi|B_{i-1,n(i-1)}$\;\,and\;\, $\Phi|B_{in(i)}$\ .

\medskip

Let\ \ $\eta = \Phi \Psi f_{02} | R_0 : R_0 \rightarrow  R_1$ .\ \
Then\;\,$\eta$\;\,is a homeomorphism ,\ \ $\eta | (D_0 \cup \partial R_0)
=\Psi f_{02} | (D_0 \cup \partial R_0)$\ ,\ \ and\ \ $\eta(J_s) = f_{02}
(J_s)$\ \ for any\ \,$s \in [0,1]$\ .

\medskip

{\bf Claim 1}.\ \ {\it For any given\ \ $x_0=(r_0, s_0) \in \;
\stackrel{\ \circ}{J}\, \times\, (0, 1/2]$, \ \ $\lim_{i \rightarrow
\infty}\eta^{2i}(x_0) = v_2$\ .}

\medskip

{\bf Proof of Claim 1.}\ \ For any\ \,$i \in \mathbb{N}$\ ,\ \,let\
\, $x_i = (r_i , s_i) = h^i(x_0)$\ .\ \ Then\ \,$r_i \in\;
\stackrel{\ \circ}{J}$ \ \,and\ \,$s_i = f_{01}^i(s_0) \rightarrow 1$\
\,as\ \,$i \rightarrow \infty$.\ \ Take a\ \, $\mu \in \mathbb{N}$\
\,such that\ \, $x_0 \in B_{0\mu}$ \ .\ \ Then for any\ \, $i \geq
k(\mu)$,\ \,we have\ \ $n(i) \geq \mu$  \ \ and\ \ $x_i \in B_{i\mu}
\subset B_{in(i)}$\ .\ \,Hence,\ \,for any odd\ \,$i \geq k(\mu)$ ,\
\ we have
$$\eta(x_i)=\Phi \Psi f_{02}(r_i,s_i)=\Phi(-r_i,\ s_{i+1})=
\left(\varphi_{n(i+1)}(-r_i)\,\;\,s_{i+1}\right)=(r_{i+1},
s_{i+1})$$ with\ \ $r_{i+1}=\varphi_{n(i+1)}(-r_i)>-r_i$\ ,\ \ \ and
for any even\ \,$i\geq k(\mu)$ , \ \ we have
$$\eta(x_i)=\Phi \Psi f_{02}(r_i, s_i)=\Phi(-r_i, s_{i+1})=(-r_i, s_{i+1})$$
with\ \ $r_{i+1}=-r_i$\ , \ \ and
$$\eta^2(x_i)=\Phi \Psi f_{02}(-r_i, s_{i+1})=\Phi(r_i, s_{i+2})=(r_{i+2}, s_{i+2})$$
with\ \
\begin{equation} \label{eq:3-1}
r_{i+2}\ \,=\ \, \varphi_{n(i+2)}(r_i)\ \,>\ \, r_i\ .
\end{equation}
Therefore,\ \ for any even\ \,$i \geq k(\mu)$,\ \ we have
\begin{equation} \label{eq:3-2}
-1 < r_i < r_{i+2} < r_{i+4} < r_{i+6} <\;\,\cdots \;\,< 1\;\,.
\end{equation}

In order to prove\ \ $\lim_{i \rightarrow \infty}r_{2i}=1$\ ,\ \
consider any given\ \, $\varepsilon>0$.\ \,Take an\ \,$m=m(\varepsilon) \in
\mathbb{N} - \mathbb{N}(\mu)$ \ \,such that\ \,$b_m>1-\varepsilon$\
\,and\ \,$-b_m < r_{k(\mu)}$\;\,.\ \ Then\ \,$r_{k(m)} > r_{k(\mu)}
> -b_m$\ .\ \ Consider the point\ \ $y_{k(m)}=(-b_m, s_{k(m)}) \in B_{k(m),
m}$ \ .\ \ Let\ \ $\beta_{k(m)}=-b_m$\;\,.\ \ For any\ \,$i\in
\mathbb{N}$\ ,\ \ let
$$y_{k(m)+ i}\;\,=\;\,(\beta_{k(m)+ i}\ ,\;\,s_{k(m)+i})\;\,=\;\,\eta^i(y_{k(m)})\;\,.$$
Similar to \eqref{eq:3-1} ,\ \ for any even\ \,$i \geq k(m)$,\ \ we
have\ \
\begin{equation} \label{eq:3-3}
  \beta_{i+2}\ \,=\ \, \varphi_{n(i+2)}(\beta_i)\ \,>\ \, \beta_i\ .
\end{equation}
Note that\ \ $n(i)=m$\ \ and\ \ $\varphi_{n(i)}=\varphi_m$ \ \ for\
\ $k(m)\leq i < k(m+1)=k(m)+2m+2$\ ,\ \ and\ \
\begin{equation} \label{eq:3-4}
 \varphi_m(r) = r + 2b_m /m\ \ \ \ \mbox{for any}\ \ r \in [-b_m,\ \,b_m - 2b_m/m]\ .
\end{equation}
In \eqref{eq:3-3} and \eqref{eq:3-4}\ ,\ \ taking\ \ $i=k(m)$\ ,\ \
$k(m)+2$\ ,\ \, $\cdots$ \ ,\ \,$k(m)+2n-2$\ \ and\ \
$r=\beta_{k(m)}$\ ,\ \, $\beta_{k(m)+2}$\ ,\ \, $\cdots$ \ ,\ \,
$\beta_{k(m)+2n-2}$\ ,\ \ we get
\begin{align*}
\beta_{k(m)+2}\; &= \varphi_{n(k(m)+2)}(\beta_{k(m)})=\beta_{k(m)} + 2b_m /m \ \,,\\
\beta_{k(m)+4}\; &= \varphi_{n(k(m)+4)}(\beta_{k(m)+2}) =
\beta_{k(m)+2} + 2b_m/m =\beta_{k(m)}+ 4b_m/m\ \,,\\
& \cdots \cdots\\
\beta_{k(m)+ 2m}&= \varphi_{n(k(m)+2m)}(\beta_{k(m)+2m-2})
=\beta_{k(m)+2m-2} + 2b_m /m \\
&=\beta_{k(m)} + 2mb_m /m = - b_m + 2b_m = b_m > 1-\varepsilon\ \,.
\end{align*}

Define a projection\ \ $p: J^2 \rightarrow J$\ \ by\ \ $p(x) = r$\ \
for any\ \ $x = (r, s)\in J^2$ ,\ \,that is ,\ \ $p(x)$\ \,is the
abscissa of\ \,$x$\ .\ \ Clearly , \ \,for any\ \ $s \in [0,1]$\
\,and any\ \ $x, y \in J_s$ ,\ \,if\ \ $p(x)> p(y)$\ \ then\ \
$p\eta^2(x)> p\eta^2(y)$.\ \ Thus  we have
$$r_{k(m)+2m} > \beta_{k(m)+2m} > 1-\varepsilon$$
since\ \ $r_{k(m)}>-b_m=\beta_{k(m)}$,\ \ and hence ,\ \ by
\eqref{eq:3-2},\ \ we have
$$\varepsilon > 1-r_{k(m)+ 2m} > 1 - r_{k(m)+2m + 2} >
1 - r_{k(m)+ 2m + 4} > \cdots > 0.$$ This means that\ \ $\lim_{i
\rightarrow \infty}r_{2i}=1$,\ \,which with\ \ $\lim_{i \rightarrow
\infty}s_i = 1$\ \,implies\ \ $\lim_{i \rightarrow \infty}
\eta^{2i}(x_0)=\lim_{i \rightarrow \infty}(r_{2i}, s_{2i})=(1,1)=v_2$.\
\ \ Claim 1 is proved.

\medskip

Since\ \ $\eta(v_2) = v_1$,\ \ from Claim 1 we get

\medskip

{\bf Claim 2.}\ \ {\it For any given\ \ $x_0=(r_0, s_0) \in\;
\stackrel{\ \circ}{J} \times (0, 1/2]$, \ \ $\lim_{i \rightarrow
\infty} \eta^{2i+1}(x_0) = v_1$\ ,\ \ and\ \ $\omega(x_0, \eta)=\{v_1 ,
v_2\}$.}

\medskip

Define the {\it vertical reflection}\ \, $\Psi_v: J^2 \rightarrow
J^2$\ \,by $$\Psi_v(r, s)=(r, -s) \ \ \ \ \ \mbox{for any}\ \ (r, s) \in
J^2.$$ Then\ \ $\Psi_v^2 = id$\ ,\ \ $\Psi_v
f_{02}=f_{02}^{-1}\Psi_v$\ ,\ \ and\ \ $\Psi_v(D_i)=D_{-i-1}$\ \ for
any\ \ $i\in \mathbb{Z}$\ .\ \ Write
$$R_{-1}=J \times [-1, 0]\ ,\ \ \ \ \ R_{-2}=J \times [-1, -1/2]\ .$$
Then\ \ $J^2=R_0 \cup R_{-1}$\ , \ \ $R_{-1}=\Psi_v(R_0)=D_{-1} \cup
R_{-2}$\ ,\ \ and\ \ $R_{-2}=\Psi_v(R_1)=\bigcup\{D_{-i-1} : i \in
\mathbb{N}\} \cup J_{-1}$\ .\ \ Let
$$\zeta\;\,=\;\, \Psi_v\, \eta\, \Psi_v\,|\,R_{-1}\;\,:\;\,R_{-1}\;\, \rightarrow \;\,R_{-2}\;\,.$$
Then\ \,$\zeta$\ \,is a homeomorphism. \ \ From Claims 1 and 2 we get

\medskip

{\bf Claim 3.}\ \ {\it For any\ \ $x \in\; \stackrel{\ \circ}{J}\,
\times \, [-1/2, 0)$,\ \ one has\ \ $\lim_{i \rightarrow
\infty}\zeta^{2i}(x) = v_4$\ ,\ \ \ $\lim_{i \rightarrow \infty}
\zeta^{2i+1}(x) = v_3$\ ,\ \ \ and\ \ $\omega(x, \zeta)=\{v_3, v_4\}$.}

\medskip

We now define\ \ $f : J^2 \to J^2$\ \ by\ \
$$f\,|\, R_0 = \eta\ ,\ \ \ \ f\,|\, D_{-1} =\Psi\,f_{02}\,|\,D_{-1}\ ,
\ \ \ \ \mbox{and}\ \ \ f \,|\, R_{-2} = \zeta^{-1}\ .$$ Then\ \,$f$\
\,is a normally rising orientation reversing homeomorphism\ ,\ \
and\ \,$f | \partial J^2 = \Psi \,f_{02}\,|\, \partial J^2$.\ \
Further ,\ \,for any\ \, $x \in \stackrel{\ \circ}{J}$$^{\,2}$,\ \,there
exists\ \, $i$\ \,and\ \,$j \in \mathbb{Z}$\ \ such that\ \, $f^i(x)
\in \; \stackrel{\ \circ}{J}\, \times \, (0, 1/2]$\ \,and\ \, $f^j(x)
\in \;  \stackrel{\ \circ}{J}\, \times [-1/2, 0)$ , \ \ and hence we
always have
$$\omega(x, f)=\omega(f^i(x)\ ,\;\,f)= \omega(f^i(x)\ ,\;\,\eta)=\{v_1, v_2\}\;\,,$$
and
$$\alpha(x, f)=\alpha(f^j(x)\ ,\;\,f)=\omega(f^i(x)\ ,\;\,f^{-1})=\omega(f^i(x)\ ,\;\,\zeta) = \{v_3, v_4\}\;\,.$$
Lemma 3.1 is proved.
\end{proof}

\section{Homeomorphisms of the plane which have neither fixed point nor unbounded orbit}

In this section, we will use Lemma \ref{lem:2-2} to construct an
orientation reversing homeomorphism on the plane which has neither
fixed point nor unbounded orbit.

 \begin{thm} \label{main-thm'}
There exists an orientation reversing fixed point free homeomorphism $h: \mathbb{R}^2
\rightarrow \mathbb{R}^2$ such that:

 \begin{enumerate}
\item The set of all periodic points of $h$ consists of $2$-periodic points and is equal to
the subset $(-\infty, -1]\cup [1, \infty)$ of $x$-axis.

\item For each non-periodic point $x$,  the orbit of $x$ is
bounded and the limit sets $\omega(x, h)=\alpha(x, h)=\{(-1, 0), (1, 0)\}$.
\end{enumerate}
\end{thm}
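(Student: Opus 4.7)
The plan is to construct a topological semi-conjugacy $\pi\colon J^2\to Q$ from $(J^2,f)$, where $f$ is the map produced by Lemma~\ref{lem:2-2}, onto a quotient space $Q$ that is a topological sphere, and then pass to $\mathbb{R}^2$ by removing the unique fixed point $q$ of the induced map $\tilde h$ on $Q$ and identifying $Q\setminus\{q\}$ with $\mathbb{R}^2$ via a topological conjugacy.

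To build $\pi$, I introduce on $J^2$ the equivalence relation $\sim$ generated by three identifications: (i) $(-1,s)\sim(-1,s')$ for all $s,s'\in J$, collapsing the left edge $\{-1\}\times J$ to a single point $p_L$; (ii) $(1,s)\sim(1,s')$ for all $s,s'\in J$, collapsing the right edge to a point $p_R$; and (iii) $(r,1)\sim(r,-1)$ for all $r\in J$, identifying the top and bottom edges pointwise. Using the explicit formula $f|\partial J^2=\Psi f_{02}|\partial J^2$ (so that $f$ swaps the left and right edges, while $f(r,\pm 1)=(-r,\pm 1)$), one verifies directly that $f$ sends $\sim$-classes to $\sim$-classes; applying the same check to $f^{-1}$ then shows that $\pi$ descends to a homeomorphism $\tilde h\colon Q\to Q$ with $\pi f=\tilde h\pi$.

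Next I identify $Q:=J^2/\!\sim$ topologically. Relation (iii) alone turns $J^2$ into a cylinder $J\times S^1$, and then (i), (ii) collapse its two boundary circles at $r=\pm 1$ each to a point, producing a topological sphere with $p_L, p_R$ as the two poles and $\pi(\partial J^2)$ a single meridian arc joining them. On this sphere the dynamics is transparent: the fixed points of $\Psi f_{02}$ on $\partial J^2$ are exactly $(0,1)$ and $(0,-1)$, which are identified by (iii) to a single point $q\in Q$, while $\omega(x,f)=\{v_1,v_2\}$ forbids fixed points in $(-1,1)^2$; hence $q$ is the unique fixed point of $\tilde h$. Moreover $\{p_L,p_R\}$ is a $2$-periodic orbit (left and right edges swap under $f$), and on the meridian arc parametrized by $r\in[-1,1]$ the map $\tilde h$ acts as $r\mapsto-r$, so every non-$q$ point of that arc is also $2$-periodic. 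Lemma~\ref{lem:2-1} then gives $\omega(\pi(x),\tilde h)=\alpha(\pi(x),\tilde h)=\{p_L,p_R\}$ for every $x\in(-1,1)^2$.

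Finally, because $Q\setminus\{q\}\cong S^2\setminus\{\mathrm{pt}\}\cong\mathbb R^2$, I choose a homeomorphism $\Phi\colon Q\setminus\{q\}\to\mathbb R^2$ with $\Phi(p_L)=(-1,0)$, $\Phi(p_R)=(1,0)$, sending the two half-open subarcs of $\pi(\partial J^2)\setminus\{q\}$ bijectively onto the rays $(-\infty,-1]$ and $[1,\infty)$ so that the involution $r\mapsto-r$ on the meridian becomes $(x,0)\mapsto(-x,0)$ on the rays. Since $\tilde h(q)=q$, the restriction of $\tilde h$ to $Q\setminus\{q\}$ is a homeomorphism, and I set $h:=\Phi\tilde h\Phi^{-1}$; this is a homeomorphism of $\mathbb R^2$ that is fixed point free, whose periodic set is exactly $(-\infty,-1]\cup[1,\infty)$ with each point of period $2$, and for which every non-periodic $x$ satisfies $\omega(x,h)=\alpha(x,h)=\{(-1,0),(1,0)\}$; orientation reversal is inherited from $f$ through $\pi$ (which is locally the identity on the interior of $J^2$) and is preserved under conjugation by $\Phi$. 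The principal obstacle will be verifying rigorously that $Q$ really is a topological $2$-sphere, in particular that the quotient map has disk neighborhoods at the three exceptional points $p_L$, $p_R$, and $q$, together with constructing $\Phi$ explicitly enough that the correspondence between the meridian arc and the two rays is unambiguous; once these technical points are settled, all the dynamical conclusions follow directly from Lemmas~\ref{lem:2-1} and~\ref{lem:2-2}.
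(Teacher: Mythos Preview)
Your approach is correct but proceeds differently from the paper. The paper does not pass through a sphere: instead it builds a continuous surjection $\xi:J^2\to J^2$ that collapses the right edge $\{1\}\times J$ to the interior point $v_0=(1/2,0)$ and the left edge to $v_9=(-1/2,0)$, while stretching each half of the top and bottom edges so that $\xi(\partial J^2)=\partial J^2\cup[v_5,v_9]\cup[v_0,v_6]$ and $\xi|\stackrel{\circ}{J}{}^2$ is injective onto the complement of those two segments. Then $g=\xi f\xi^{-1}$ is a homeomorphism of $J^2$ whose only fixed points are $v_7,v_8\in\partial J^2$, and the final map is $h=\psi\,g\,\psi^{-1}$ with $\psi(r,s)=(\tan(\pi r/2),\tan(\pi s/2))$ identifying the open square with $\mathbb R^2$. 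In effect, the paper's $\xi$ realizes a \emph{partial} identification of the top and bottom edges (only those points whose images land on the $x$-axis get identified), so the quotient stays a disk rather than becoming a sphere; the two fixed points $v_7,v_8$ survive on the boundary and are simply discarded when one passes to the interior.

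Your version identifies the \emph{entire} top and bottom edges, producing a sphere with a single fixed point $q$, and then removes $q$. This is conceptually cleaner---the quotient is a familiar space and the reduction to Lemmas~\ref{lem:2-1} and~\ref{lem:2-2} is transparent---but, as you note, it costs you a Schoenflies-type argument to produce the homeomorphism $\Phi:Q\setminus\{q\}\to\mathbb R^2$ carrying the two meridian half-arcs to the prescribed rays. The paper's route trades that abstract step for the concrete construction of $\xi$, after which everything happens inside $J^2$ and the passage to $\mathbb R^2$ is given by an explicit formula. Both arguments hinge on the same mechanism (collapsing the vertical edges to the two points that become $(\pm1,0)$), and neither has a real gap.
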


\begin{proof}
Let the four vertices\ \ $v_1\,=\,(-1,\, 1)\;,\ \
v_2\,=\,(1,\,1)\;,\ \ v_3\,=\,(-1,\,-1)$\ \ and\ \
$v_4\,=\,(1,\,-1)$ of\ \ $J^{\,2}$\ \ be as in Lemma \ref{lem:2-2}\
.\ \ Write\ \ $v_5\,=\,(-1,\,0)\;,\ \  v_6\,=\,(1,\, 0)\;,\ \
v_7\,=\,(0,\,1)\;,\ \ v_8\,=\,(0,\,-1)\;,\ \ v_9\,=\,(-1/2,\, 0)\;,\
\ v_0\,= \,(1/2,\, 0)$\ . \ Then \ $v_5\;,\ \,v_6\;,\ \,v_7\;,\
\,v_8$ \ are the four midpoints of the four sides of\
\,$J^{\,2}$\,,\ \ respectively\,.\ \ Let the level reflection\ \ $\Psi$
\ and the vertical reflection \ $\Psi_v$ \ be the same as in the proof
of Lemma \ref{lem:2-2} .\ \ Clearly\,,\,\,\,we can take a continuous
map \,\,\ $\xi:J^{\,2}\,\to\,J^{\,2}$\,\,\ satisfying the following
conditions\ :

\medskip

(1)\ \ \ $\xi(0,s)\;=\;(0,s)$\ \ \,for any\ \ $s\,\in\,J$\;,\ \ \
and \ \ $\xi(r,0)\;=\;(\,r/2\;,\;\!0)$\ \ \,for any\ \
$r\,\in\,J$\;;

\medskip

(2)\ \ \ $\xi\big(\,[v_6,v_2]\,\big)\;=\;\{v_0\}$\ ,\ \ \
$\xi\big(\,[v_7
,v_2]\,\big)\;=\;[v_7,v_2]\,\cup\,[v_2,v_6]\,\cup\,[v_6,v_0]$\ ,\ \
\ and \ \ $\xi\;| \;[v_7,v_2]$\ \ is an injection\;;

\vspace{2mm}(3)\ \ \ $\xi\Psi\;=\;\Psi\xi$\ ,\ \ \ and\ \ \
\,$\xi\Psi_v\;=\;\Psi_v\xi$\ ;

\vspace{2mm}(4)\ \ \ $\xi\;|\;\stackrel{\ \circ}{J}$$^{\,2}$\ \ is
an injection\;, \ \ and\ \ \ \,$\xi(\stackrel{\
\circ}{J}$$^{\,2})\;=\;\stackrel{\
\circ}{J}$$^{\,2}\,-\,[v_5,v_9]\,-\,[v_ 0,v_6]$\ .

\noindent Let the homeomorphism \ $f:J^{\,2}\to J^{\,2}$\ \ be as in
Lemma \ref{lem:2-2}\ .\ \ Define a map\ \ $g:J^{\,2}\to J^{\,2}$\ \
by\ \ $g\;=\;\xi\,f\,\xi^{\;-1}$\,.\ \ Note that\,,\ \,if\ \
\,$x\,\in\;[\,v_5\,,v_9\,]\,\cup\;[\,v_0\,,v_6\,]$\ ,\ \ then \
$\xi^{\;-1}(x)$\ \ contains more than one point\,,\ \,but\ \
$\xi\,f\,\xi^{\;-1}(x)$\ \ still contains only one point\,.\ \,Thus
\ $g$ \ is well defined\,. \ It is easy to check that \ $g$ \ is a
bijection\,,\ \,and is continuous\,. \,Thus \ $g:J^{\,2}\to J
^{\,2}$\ \ is a homeomorphism\,.\ \ Moreover\,, \ from the
definition \ $g\;=\;\xi\,f \,\xi^{\;-1}$ \ we see that\ \ $g$\ \ has
the following properties\;:

\medskip

(P.A)\ \ \ $g$ \ is orientation reversing\ ;

\medskip

(P.B)\ \ \ $f$\ \ and\ \ $g$ \ are topologically semi-conjugate\,,\
\,and \ $\xi$ \ is a topological semi-conjugacy from\ \ $f$\ \ to \
$g$\ ;

\medskip

(P.C)\ \ \ The set of periodic points of \ $g$ \ is \ $P(g)\;=\
\partial J^{\, 2}\,\cup\;[\,v_5\,,v_9\,]\,\cup\;[\,v_0\,,v_6\,]\;=\
\xi(\partial J^{\,2})$\;, \ with \ $g \:|\,P(g)\;=\ \Psi\:|\,P(g)$ .
\ Hence\,, \,in \ $P(g)$\ , \ only \ $v_7$ \ and \ $v_8$ \ are fixed
points , \,and other points of \ $P(g)$ \ are \,2-periodic points\;;

\vspace{2mm}(P.D)\ \ \ For any\ \ $x\,\in\,J^{\,2}-P(g)\;=\
\xi(\stackrel{\ \circ}{J}$$^{\, 2})$\ , \ from \,Lemma \ref{lem:2-2}
\,and \,Lemma \ref{lem:2-1} \, we get
$$\omega(x,g)\ =\ \xi\big(\,\omega(\xi^{\,-1}(x)\,,\,f)\,\big)\ =\
\xi\big(\,\{v_1,v_2\}\,\big)\ =\ \{v_9,v_0\}$$ and
$$\alpha(x,g)\ =\ \xi\big(\,\alpha(\xi^{\,-1}(x)\,,\,f)\,\big)\ =\
\xi\big(\,\{v_3,v_4\}\,\big)\ =\ \{v_9,v_0\};$$

(P.E)\ \ \ By \,(P.D) \,it is easy to see that\,, \,for any\ \
$x\,\in\, J^{\,2}-P(g)$\ ,\ \ there exists an \
$\varepsilon\,=\;\varepsilon(x)\,\in\,(\,0,1/4\,]$\ \ such that \
$O(x,g)\,\subset\;[\,-1+\varepsilon\,,
\,1-\varepsilon\,]^{\,2}$\ .

\medskip

We now define a homeomorphism\,\,\ $\psi\,:\,\stackrel{\
\circ}{J}$$^{\,2}\,\to\, \Bbb R^{\,2}$ \ by \ $\psi(r,s)\ =\
\big(\,{\rm tg}(\;\!\pi r/2)\ ,\;{\rm tg}(\;\!\pi s /2)\,\big)$ \
for any \ $(r,s)\,\in\,\stackrel{\ \circ}{J}$$^{\,2}$\,,\ \ and then
define a homeomorphism \ $h\,:\,\Bbb R^{\,2}\,\to\,\Bbb R^{\,2}$ \
by \ $h\;=\ \psi\,g\,\psi^{\,-1 }$\,. \ From \ (P.A) -- (P.E) \ we
see that\ \ $h$\ \ has the following properties\;:

\medskip

(P.F)\ \ \ $h$ \ is orientation reversing\ ;

\medskip

(P.G)\ \ \ $h$ \ and \ $g|(-1, 1)^2$ \ are topologically conjugate\,, \,with \
$\psi$ \ being a topological conjugacy from\ \ $g$\ \ to \ $h$\ ;

\medskip

(P.H)\ \ \ The set of periodic points of \ $h$ \ is
$$\mbox{$P(h)\;=\ \psi\big(\,P(g)\,\cap\,\stackrel{\ \circ}{J}$$^{\,2}\,\big)\;=\ (\,-
\infty,-1\,]\times\{0\}\;\cup\;[\,1,\infty)\times\{0\}\;,$}$$ with \
$h(r,0)\;=\;(-r,0)$ \ for any \ $(r,0)\,\in\,P(h)$\ .\ \ Hence \ $h$
\ has no fixed point\,,\,\,\,and all points in\,\ \,$P(h)$\,\,\ are
\,2-periodic points\ .\ \ Particularly\,, \,write \ $w_1\,=\,(-1,0)\
,\ \ w_2\,=\,(1,0)$,\ \ and let \ $O_2\; =\;\{w_1,w_2\}$ . \ Then
\ $O_2$ \ is a periodic orbit of \ $h$ .

\medskip

(P.I)\ \ \ For any\ \ $x\,\in\,\Bbb R^{\,2}-P(h)\;=\
\psi(J^{\,2}-P(g))$\ , \ by \,(P.D) \,it holds that
$$\omega(x,h)\ =\,\,\psi\big(\,\omega(\psi^{\,-1}(x)\,,\,g)\,\big)\ =\
\psi\big(\,\{v_9,v_0\}\,\big)\ =\ \{w_1,w_2\}\ =\ O_2$$ and
$$\alpha(x,h)\ =\,\,\psi\big(\,\alpha(\psi^{\,-1}(x)\,,\,g)\,\big)\ =\
\psi\big(\,\{v_9,v_0\}\,\big)\ =\ \{w_1,w_2\}\ =\ O_2\;;$$

\medskip

(P.J)\ \ \ For any \ $x\,\in\,\Bbb R^{\,2}$ , \ the orbit \ $O(x,h)$
\ is bounded .

\noindent From\ \,(P.F) -- (P.J) \,we see that\,\,\ $h$\,\,\ has the
properties mentioned in Theorem \ref{main-thm'}, \ and the proof is
complete\,.
\end{proof}
\vspace{10mm}

\subsection*{Acknowledgements}$\\$
We express our gratitude for the suggestions and comments provided by the reviewers.$\\$
Jiehua Mai and Fanping Zeng are supported by NNSF of China (Grant
No. 12261006) and Project of Guangxi First Class Disciplines of
Statistics (No. GJKY-2022-01); Enhui Shi is supported by NNSF of
China (Grant No. 12271388); Kesong Yan is supported by NNSF of China
(Grant No. 12171175).

\end{document}